\newcommand{\assign}{:=}
\newcommand{\nin}{\not\in}
\newcommand{\nocomma}{}
\newcommand{\nosymbol}{}
\newcommand{\tmtextit}[1]{{\itshape{#1}}}
\newtheorem{thrm}{Theorem}[section]
\newtheorem{lem}[thrm]{Lemma}
\newtheorem{prop}[thrm]{Proposition}
\theoremstyle{definition}
\newtheorem{definition}[thrm]{Definition}
\numberwithin{equation}{section}
\title{A new generalization of Browder's degree}
\author{Mohammad Niksirat}
\address{Department of Mathematics, University of Toronto, Toronto, Canada, M5S 2E4}
\email{niksirat@math.toronto.edu}
\keywords{degree theory, finite rank approximation, monotone maps}
\subjclass[2010]{Primary 47H11, Secondary 47H07}
\begin{document}

\begin{abstract}
A new generalization of the Browder's degree for the mappings of the type $(S)_+$ is presented. The main idea is rooted in the observation that the Browder's degree remains unchanged for the mappings of the form $A: Y\to X^*$, where $Y$ is a reflexive uniformly convex Banach space continuously embedded in the Banach space $X$. The advantage of the suggested degree lies in the simplicity it provides for the calculations of degree associated to nonlinear operators. An application from the theory of phase transition in liquid crystals is presented for which the suggested degree has been successfully applied.
\end{abstract}
\maketitle

\section{Introduction}
For a Banach space $X$, the continuous pairing between $X^*$, the topological dual of $X$, and $X$ is denoted by $\langle , \rangle$. A map $A : X \rightarrow X^{\ast}$ is called
$(S)_+$ if for every sequence $(u_n) \subset X$, $u_n \rightharpoonup u$, the
inequality
\begin{equation}
  \limsup_{n\to\infty}  \langle A [u_n], u_n - u \rangle \leq 0,
\end{equation}
imply $u_n \to u$; see e.g. \cite{Browder82,Berkovits86}. It is well known ( see for example \cite{Skrypnik1994}) that the map 
\begin{equation}
A[u]:=\sum_{\alpha\leq m} (-1)^{|\alpha|} D^\alpha f_\alpha(x,u,\dots,D^m u),
\end{equation}
from $X= W_0^{m, p} (\Omega)$ to $X^*= W^{- m, q}(\Omega)$ is $(S)_+$ where the pairing $\langle A[u],\phi	\rangle$ is defined by the relation
\begin{equation}
  \langle A [u], \phi \rangle = \sum_{| \alpha | \leq m} \int_{\Omega}
  f_{\alpha} (x, u,\dots,D^m u) D^{\alpha} \phi.
\end{equation}
The existence and the multiplicity problems of the quasi-linear elliptic equations in divergence form can be studied by the degree of the operator $A$ at zero.

A degree theory, keeping all classical properties of a topological degree, for the bounded demi-continuous $(S)_+$ maps in Hilbert spaces is developed by I. Skrypnik \cite{Skrypnik73}. F. Browder generalized the degree to the mapping in uniformly convex reflexive Banach spaces \cite{Browder82,Browder1976}.  Browder's construction is based on the direct generalization of the classical Brouwer degree through the Galerkin type approximation of $(S)_+$ mappings. An alternative construction through generalizing the Leray-Schauder degree has been
carried out by J. Berkovits \cite{Berkovits1997,Berkovits86}. A new generalization of the Browder's degree theory from the Nagumo degree is reported by A. Kartsatos and D. Kerr \cite{Karts11}. 

A turning point in the application of degree theory is made by Y.Y. Li  \cite{Li1989}, where the author uses the Fitzpatrick's degree of quasi-linear Fredholm maps to define a degree for fully nonlinear second order elliptic equations. A remarkable progress is reported by I. Skrypnik \cite{Skrypnik1994}, showing that every fully nonlinear uniformly
elliptic equation (satisfying some growth rate conditions) is represented by an operator equation involving $(S)_+$ mapping. This formulation opens up a way to study the well-posedness problem of such equations by topological degree argument.

Nevertheless, in all constructions of degree, some type of continuity (and in the weakest case, the demi-continuity) is required and can not be relaxed. A map $A:X\to X^*$ is called demi-continuous if the strong convergence $x_n\stackrel{X}\to x$ implies the weak convergence $A x_n \stackrel{X^*}\rightharpoonup A x$. However, this assumption fails for some real applications. Here we give an example from the phase transition in liquid crystals. It is shown that (see \cite{Niksirat14a,Niksirat14}), the stationary solution of the Doi-Onsager equation
\begin{equation}
  \label{dyn} \frac{\partial f}{\partial t} = \Delta_r f +  {\rm div}
  (f \nabla_r U (f)),
\end{equation}
for the interaction potential function $U(r)$
\begin{equation}
  \label{U2} U (f) (r) = \lambda \int_{S^2} |r\times r'| f (r') d \sigma (r'),
\end{equation}
and the probability density function $f(r)$ of the directions of the rod-like molecules 
\begin{equation}
  \label{f2} f (r) = \left( \int_{S^2} e^{- U (f)} d \sigma \right)^{- 1} e^{-
  U (f) (r)} ,
\end{equation}
reduces to the fixed point problem $u-\lambda \Gamma[u]=0$, where
\begin{equation}
  \label{Onsag} \Gamma [u] (r) := \left( \int_{S^2} e^{- u (r)} \right)^{- 1}
  \int_{S^2} \left (|r\times r'|-\frac{\pi}{4}\right) e^{- u (r')} d \sigma (r').
\end{equation}
The natural function space for the problem is
\begin{equation}
H_0(S^2)=\{u\in L^2(S^2),u(-r)=u(r),\int_{S^2}{u(r) d\sigma(r)}=0 \}.
\end{equation}
It is simply seen that $\Gamma$ fails to be demi-continuous in any open neighbourhood of $0 \in H_0(S^2)$. Fix $r \in S^2$, and let $u_n$ be the following sequence
\begin{equation}
  u_n (r) = \left\{ \begin{array}{ll}
    \log (2 \pi (1 - \cos (1 / n))) & \cos^{- 1} (r. \bar{r})^{\nosymbol} \in
    \left( 0, \frac{1}{n} \right)\\
    0 & {\rm otherwise}
  \end{array} \right. .
\end{equation}
Obviously, $u_n \xrightarrow{H_0 (S^2)} 0$, while
\[ \lim_n \Gamma [u_n] = \lim_n  \frac{1}{2 \pi (1 - \cos (1 / n))}  \int_0^{1 / n}
   \int_0^{2 \pi} \hat{K} (\gamma) d \sigma \neq G (0) = 0. \]
   We have the following theorem.
\begin{thrm}[Niksirat \cite{Niksirat14a}]
  \label{Hw} Fix $\lambda$ and let $\Omega_\lambda$ be the following set
  \[
  \Omega_\lambda= \{u \in H_0 (S^2), |u (r) | \leq
    \lambda  \} .
  \]
  The map $\Gamma : \Omega_\lambda \subset H_0(S^2) \to H_0(S^2)$ is continuous and compact.
 \end{thrm}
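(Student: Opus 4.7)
The plan is to exploit the two pointwise consequences of the constraint $|u(r)|\le\lambda$ on $\Omega_\lambda$: the nonlinearity $u\mapsto e^{-u}$ is globally Lipschitz with constant $e^{\lambda}$ on $\Omega_\lambda$, and the denominator stays away from zero, since $4\pi e^{-\lambda}\le \int_{S^2} e^{-u}\,d\sigma\le 4\pi e^{\lambda}$. Write $\Gamma[u]=N[u]/D[u]$ with
\[
D[u]=\int_{S^2} e^{-u(r')}\,d\sigma(r'),\qquad N[u](r)=\int_{S^2}\bigl(|r\times r'|-\tfrac{\pi}{4}\bigr)e^{-u(r')}\,d\sigma(r'),
\]
and note that the kernel is bounded by $1+\pi/4$ and $1$-Lipschitz in $r$ (since $\bigl||r_1\times r'|-|r_2\times r'|\bigr|\le|r_1-r_2|$).

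For continuity, suppose $u_n\to u$ in $H_0(S^2)$ with $u_n,u\in\Omega_\lambda$. The mean-value inequality gives $|e^{-u_n}-e^{-u}|\le e^{\lambda}|u_n-u|$ pointwise, and Cauchy--Schwarz then yields $\|e^{-u_n}-e^{-u}\|_{L^1(S^2)}\le C\|u_n-u\|_{L^2(S^2)}$. Applied to $D$ and to $N$ separately this shows $D[u_n]\to D[u]$ and, uniformly in $r$,
\[
|N[u_n](r)-N[u](r)|\le (1+\tfrac{\pi}{4})e^{\lambda}\|u_n-u\|_{L^1(S^2)}\longrightarrow 0.
\]
Since $D[u]$ is bounded away from $0$, division gives $\|\Gamma[u_n]-\Gamma[u]\|_{L^\infty}\to 0$, hence also convergence in $L^2$.

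For compactness I would apply Arzelà--Ascoli to $\Gamma(\Omega_\lambda)\subset C(S^2)$. Uniform boundedness $\|\Gamma[u]\|_{\infty}\le e^{2\lambda}(1+\pi/4)$ is immediate from the bounds on $N$ and $D$. Equi-Lipschitz regularity follows from the Lipschitz dependence of the kernel in $r$:
\[
|\Gamma[u](r_1)-\Gamma[u](r_2)|\le \frac{1}{D[u]}\int_{S^2}\bigl||r_1\times r'|-|r_2\times r'|\bigr|\,e^{-u(r')}\,d\sigma(r')\le e^{2\lambda}\,|r_1-r_2|,
\]
with a constant independent of $u\in\Omega_\lambda$. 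Arzelà--Ascoli then gives precompactness in $C(S^2)$, which embeds continuously into $H_0(S^2)$.

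It remains to confirm that $\Gamma[u]$ lies in $H_0(S^2)$: the symmetry $\Gamma[u](-r)=\Gamma[u](r)$ follows from $|(-r)\times r'|=|r\times r'|$, and the zero-mean condition is precisely what forces the subtraction of $\pi/4$, since a direct Fubini computation in spherical coordinates centred on $r'$ gives $\int_{S^2}|r\times r'|\,d\sigma(r)=2\pi\int_0^\pi\sin^2\theta\,d\theta=\pi^2=\tfrac{\pi}{4}|S^2|$ for every $r'$. The only real subtlety is conceptual rather than technical: the pointwise bound defining $\Omega_\lambda$ is exactly what rules out blow-up sequences like the one given in the introduction with $\log(2\pi(1-\cos(1/n)))\to-\infty$, and once it is imposed the Lipschitz estimate for $e^{-u}$ replaces the failed demi-continuity and makes both claims essentially routine.
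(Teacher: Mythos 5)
The paper does not actually prove this theorem: it is stated as ``Theorem (Niksirat \cite{Niksirat14a})'' and imported by citation from that external reference, so there is no internal proof to compare yours against. Judged on its own merits, your argument is correct and complete. The key observations---that on $\Omega_\lambda$ the map $u\mapsto e^{-u}$ is Lipschitz with constant $e^{\lambda}$ and the denominator $D[u]=\int_{S^2}e^{-u}\,d\sigma$ is pinned in $[4\pi e^{-\lambda},4\pi e^{\lambda}]$---do convert the $L^2$-continuity claim into a one-line estimate through $\|e^{-u_n}-e^{-u}\|_{L^1}\le C\|u_n-u\|_{L^2}$, and the uniform $C^{0,1}$ bound on $\Gamma[u]$ in the spatial variable (via the reverse triangle inequality $\bigl||r_1\times r'|-|r_2\times r'|\bigr|\le|r_1-r_2|$) is exactly what Arzel\`a--Ascoli needs for compactness; in fact your Lipschitz constant can be sharpened to $1$ since $\int e^{-u}\,d\sigma/D[u]=1$, and the kernel bound $1+\pi/4$ can be tightened to $\pi/4$, but neither affects the conclusion. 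The verification that the image lands in $H_0(S^2)$---evenness of $|r\times r'|$ in $r$ and the Fubini computation $\int_{S^2}|r\times r'|\,d\sigma(r)=\pi^2=\tfrac{\pi}{4}|S^2|$ forcing zero mean---is also right, though strictly speaking $C(S^2)$ does not embed into $H_0(S^2)$; you want to say $\Gamma(\Omega_\lambda)$ is precompact in $C(S^2)\hookrightarrow L^2(S^2)$ and lies in the closed subspace $H_0(S^2)$, hence is precompact there. Your closing remark correctly identifies why the $L^\infty$ constraint on $\Omega_\lambda$ is essential: it is precisely what blocks the blow-up sequence $u_n$ exhibited in the introduction and restores enough uniformity for these estimates to work.
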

By the above theorem, the map $\Gamma:W^{k,2}(S^2)\cap H_0(S^2)\to H_0(S^2)$ for $k>3$ is continuous due to the compact embedding of $W^{k,2}$ into the space of continuous functions. For the problem defined on the unit circle, the author \cite{Niksirat14} used the classical degree theory of the map 
\[
\mathbb{I}-\lambda G:W^{k,2}(S^1)\cap H_0(S^1)\to \left( W^{k,2}(S^1)\cap H_0(S^1)\right)^*.
\]
For higher dimensional problems, the calculations are intricate and lengthy. In this paper, we construct a new degree theory for the mapping $A : Y\subset X\to X^*$ where $X$ is a
Banach space, $Y$ is a separable reflexive Banach space continuously embedded
in $X$, and $A$ a bounded, demi-continuous and $(S)_+$. The constructed degree enjoys all properties of a classical topological degree. The class of homotopies for which the suggested degree remain constant is narrower than the degree usually that is usually defined for $(S)_+$ mappings.

\section{Definition of the degree}
Let $A: Y \to X^*$ be a map where $X$ is a Banach spaces and $Y$ is a separable reflexive Banach space continuously embedded
in $X$. Without loss of generality, we can assume that $Y$ is a reflexive separable locally uniformly convex space due to the Kadec-Klee theorem (see \cite{Fabian11} Theorem 8.1).
On the other hand, if $Y$ is a separable locally uniformly Banach space, the Browder-Ton embedding theorem, see \cite{Browder1968a,Berkovits2003}, implies the existence of a Hilbert spacee $H$ such that the embedding $j:H\hookrightarrow Y$ is dense and compact. Choosing a basis $\mathcal{H}=\{h_1,h_2,\dots\}$ for $H$, we can define the set $\mathcal{Y}=\{y_1,y_2,\dots\}$ where $y_k=j(h_k)$, and accordingly, the filtration $\{Y_1,Y_2,\dots\}$ for $Y$ where $Y_n={\rm span}\{y_1,\dots,y_n\}$.
\begin{definition}
For $A:Y\to X^*$, the finite rank approximation $A_n: Y \rightarrow Y_n$ is defined by the relation 
\begin{equation}\label{FRA}
  A_n (u) = \sum_{k = 1}^n \langle A [u], i (y_k) \rangle y_k,
\end{equation}
where $\langle, \rangle$ denotes the continuous pairing between $X^{\ast}$ and
$X$ and $i : Y \to X$ denotes the continuous embedding of $Y$ into
$X$. We should notice that the above approximation is completely different from the finite rank approximation of the map $i^* A:Y \to Y^*$.
\end{definition}
The inner product $(,)$ in $Y_n$ is uniquely defined by the relation $(y_i,y_j)=\delta_{i j}$. It is simply verified that $A_n$ and $A$ coincide on $Y_n$ in the following sense
\begin{eqnarray}
  \langle A [u], i(v) \rangle = (A_n (u), v),\hspace{0.4cm} \forall v\in Y_n.
\end{eqnarray}
Let $W$ be a subspace of $X$. We write $A [u]\stackrel{W} =0$ if $\langle A [u], i(v) \rangle= 0$ for all $v \in W$. In sequel, we assume that $\Omega\subset Y$ is an open and bounded set, and $A : \Omega
\to X^*$ is bounded, demi-continuous and $(S)_+$ in the following sense:
\begin{itemize}
  \item $A$ is bounded if $A [\Omega]$ is bounded in $X^{\ast}$.
  
  \item $A$ is demi-continuous if $u_n \stackrel{Y}\to u$, then $A
  [u_n] \stackrel{X^*}\rightharpoonup A [u]$. In our setting, the latter weak convergence reads 
  $\langle A [u_n], i (y) \rangle \rightarrow \langle A [u], i
  (y) \rangle$ for all $y \in Y$. Note that this notion is weaker than the
  demi-continuity of a map from $X$ to $X^*$.
  
  \item $A$ is $(S)_+$ if for every sequence $(u_n) \subset Y$, $u_n
  \rightharpoonup u $, the inequality
  \begin{equation}
    \limsup_{n \rightarrow \infty}  \langle A [u_n], i(u_n - u) \rangle \leq
    0,
  \end{equation}
  imply $u_n \rightarrow u$. 
\end{itemize}
We further assume that $A$ satisfies the following condition $(H)$:
\begin{eqnarray}
  (H) : \{ u \in Y ; A[u]\stackrel{Y} = 0 \} = \{ u
  \in Y ; A [u] \stackrel{X}= 0 \} .
\end{eqnarray}
\begin{lem}
  \label{0-lem}Let $D \subset \bar{\Omega}$ be a closed set. If there exists a
  sequence $(u_n) \subset D\cap Y_n$ such that $A[u_n]\stackrel{Y_n} = 0$, 
  then the equation $A [u] = 0 \in X^*$ is solvable in $D$. In
  particular, if $u_n \in \partial \Omega$ and $A [u_n] \stackrel{Y_n}= 0$ then
  there is $u \in \partial \Omega$ such that $A [u] \stackrel{X}= 0$.
\end{lem}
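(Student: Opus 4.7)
The plan is to extract a weakly convergent subsequence of $(u_n)$, upgrade convergence to the strong topology using the $(S)_+$ property, and then pass to the limit in the equation using demi-continuity together with condition $(H)$. The pivotal computation is the verification of the $\limsup$ hypothesis in the definition of $(S)_+$, which rests on combining the vanishing of $A[u_n]$ on $Y_n$ with density of $\bigcup_m Y_m$ in $Y$.

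\textbf{Step 1: Weak subsequential limit.} Since $D\subset\bar\Omega$ is bounded in $Y$ and $Y$ is reflexive, the sequence $(u_n)$ admits a subsequence (not relabeled) weakly converging to some $u\in Y$. I would not try to infer $u\in D$ from weak closedness at this point; instead I will obtain strong convergence and use closedness of $D$ at the end.

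\textbf{Step 2: Verifying the $(S)_+$ hypothesis.} I would split
\[
\langle A[u_n],\, i(u_n-u)\rangle \;=\; \langle A[u_n],\, i(u_n)\rangle \;-\; \langle A[u_n],\, i(u)\rangle.
\]
Since $u_n\in Y_n$ and $A[u_n]\stackrel{Y_n}{=}0$, the first term vanishes identically. For the second term, pick $v_m\in Y_m$ with $v_m\to u$ strongly in $Y$ (possible because $\mathrm{span}(\mathcal{Y})$ is dense in $Y$ by the density of the Browder--Ton embedding $j:H\hookrightarrow Y$ together with the fact that $\mathcal H$ is a basis of $H$). For $n\geq m$, the vanishing condition gives $\langle A[u_n],i(v_m)\rangle=0$, hence
\[
|\langle A[u_n],i(u)\rangle| \;=\; |\langle A[u_n],i(u-v_m)\rangle| \;\leq\; \|A[u_n]\|_{X^*}\,\|i\|_{Y\to X}\,\|u-v_m\|_Y.
\]
Boundedness of $A$ on $\Omega$ makes the right-hand side uniformly small in $n$ once $m$ is large, so $\langle A[u_n],i(u)\rangle\to 0$. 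Therefore $\lim_n \langle A[u_n],i(u_n-u)\rangle = 0$, and the $(S)_+$ property yields $u_n\to u$ in $Y$. Closedness of $D$ then forces $u\in D$.

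\textbf{Step 3: Passing to the equation.} Demi-continuity gives $\langle A[u_n],i(y)\rangle\to \langle A[u],i(y)\rangle$ for every $y\in Y$. Applied to $y=y_k$, the left-hand side is $0$ for all $n\geq k$, so $\langle A[u],i(y_k)\rangle=0$ for every $k$. By linearity and density of $\mathrm{span}(\mathcal Y)$, one gets $A[u]\stackrel{Y}{=}0$, and hypothesis $(H)$ promotes this to $A[u]\stackrel{X}{=}0$, i.e.\ $A[u]=0$ in $X^*$. The ``in particular'' statement is the special case $D=\partial\Omega$, which is closed as the boundary of an open set.

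\textbf{Expected difficulty.} The mechanical parts (weak compactness, demi-continuity, invocation of $(H)$) are straightforward. The only genuinely delicate point is the uniform-in-$n$ control of $\langle A[u_n],i(u)\rangle$ in Step 2: one must combine the vanishing of $A[u_n]$ on the entire finite-dimensional subspace $Y_n$ (not merely on the line through $u_n$) with the approximation $v_m\to u$ and the boundedness of $\|A[u_n]\|_{X^*}$, all in the correct order of quantifiers. This is where the whole argument either succeeds or collapses.
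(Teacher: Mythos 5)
Your proof is correct and follows essentially the same route as the paper: extract a weakly convergent subsequence, use the vanishing of $A[u_n]$ on $Y_n$ together with density of $\bigcup_m Y_m$ to verify the $(S)_+$ hypothesis, then pass to the limit by demi-continuity and condition $(H)$. The only cosmetic difference is in the final step, where the paper re-runs the boundedness estimate with a sequence $\xi_n\in Y_n$, $\xi_n\to y$, while you observe directly that $\langle A[u],i(y_k)\rangle=0$ for each $k$ and invoke density plus continuity of $i^*A[u]\in Y^*$ — a slight streamlining of the same idea.
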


\begin{proof}
  Let $(u_n)\subset D$ be a sequence and $A[u_n]\stackrel{Y_n} = 0$. Since $D$ is bounded, there exists $u \in Y$ and a subsequence of $(u_n)$
  (that we still denote it by $u_n$) such that $u_n \rightharpoonup u$. Take an arbitrary
  sequence $(\zeta_n)$, $\zeta_n \in Y_n$ such that $\zeta_n \stackrel{Y}\longrightarrow
  u$. By the relation $A[u_n]\stackrel{Y_n}=0$ and the boundedness property of $A$, we have
  \begin{eqnarray}
    \limsup_{n \to \infty} \langle A [u_n], i (u_n - u) \rangle &
    = & \limsup_{n \to \infty}  \langle A [u_n], i
    (\zeta_n - u) \rangle \nonumber\\
    & \leq & \limsup_{n \to \infty} \| A [u_n]
    \|_{X^*}  \| \zeta_n - u \|_Y = 0. 
    \label{eq:201507021058}
  \end{eqnarray}
 Since $A$ is of class $(S)_+$ on $\Omega$, the inequality (\ref{eq:201507021058}) and the closedness of $D$ imply $u_n \to u\in D$.
  By the demi-continuity of $A$ on $Y$ we conclude $A [u_n] \rightharpoonup A [u]$. Now let $y \in Y$ be arbitrary. Take a sequence $(\xi_n)$, $\xi_n \in
  Y_n$ such that $\xi_n \xrightarrow{Y} y$. We have
  \begin{eqnarray}
    \langle A [u], i (y) \rangle & = & \lim_{n \to \infty} 
    \langle A [u_n], i (y) \rangle \\
    & = & \lim_{n \to \infty} \langle A [u_n], i (y -
    \xi_n) \rangle \nonumber\\
    & \leq & \lim_{n \to \infty}  \| A [u_n] \|_{X^*}
    \| y - \xi_n \|_{Y} = 0. 
  \end{eqnarray}
  Replacing $y$ by $- y$ implies $\langle A[u_n],i(y) \rangle \geq 0$ and thus $A [u]\stackrel{Y}= 0$. Finally the condition $(H)$ implies $A [u] \stackrel{X}= 0$. 
\end{proof}

\begin{definition} \label{deg-def}
Assume that $0 \not\in A [\partial\Omega]$. The index of $A$ in $\Omega$ at $0 \in X^{\ast}$ is defined by the relation
\begin{eqnarray}
  \label{deg-A-An} {\rm ind} (A, \Omega) = \lim_{n \to \infty}
  \deg_B (A_n, \Omega_n, 0),
\end{eqnarray}
where $\Omega_n = \Omega \cap Y_n$ and $\deg_B$ denotes the Brouwer degree.
\end{definition}
\remark{
  The index of $A$ is different from the Browder degree of the map
  $A : Y \rightarrow Y^*$ even if $Y$ is a subspace of
  $X$. For example, let $X$ be a uniformly convex Banach space $X=Y\oplus \{u\}$ and $J:X \to X^*$ the duality map. Consider the map
  $A:Y\to X^*$, $\langle A[x],y+t u\rangle=\langle J(x),y\rangle+t $. It is simply seen that the Browder degree of the map $A:Y\to Y^*$ equals
  $1$ but the index of the map $A:Y\to X^*$ is $0$ by the definition (\ref{deg-def}). We first justify the definition (\ref{deg-def}).}

\begin{prop}
  Assume that $0 \not\in A [\partial\Omega]$. The Brouwer degree of $A_n$ in
  $\Omega_n$ is stable, that is, there exists $N_0 > 0$ such that for $n \geq
  N_0$ we have
  \begin{equation}
    \label{deg-stbl} \deg_B (A_{n - 1}, \Omega_{n - 1}, 0) = \deg_B (A_n,
    \Omega_n, 0) .
  \end{equation}
\end{prop}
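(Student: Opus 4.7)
The plan is to show that both Brouwer degrees in (\ref{deg-stbl}) are defined for $n$ large, then connect $A_n$ on $\Omega_n$ to the product-type map $\tilde A_n(v+sy_n) \assign A_{n-1}(v) + sy_n$ by a two-step admissible homotopy inside $Y_n$, and finally apply the reduction property of the Brouwer degree to identify $\deg_B(\tilde A_n,\Omega_n,0)$ with $\deg_B(A_{n-1},\Omega_{n-1},0)$.

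First I would verify that $0\notin A_n[\partial\Omega_n]$ for all sufficiently large $n$: otherwise, some subsequence of zeros $u_n\in\partial\Omega_n\subset\partial\Omega$ with $A[u_n]\stackrel{Y_n}=0$ would, via Lemma~\ref{0-lem}, produce $u\in\partial\Omega$ with $A[u]\stackrel{X}=0$, contradicting the hypothesis $0\notin A[\partial\Omega]$. Next, decompose each $u\in Y_n$ uniquely as $u=v+sy_n$ with $v\in Y_{n-1}$, $s\in\mathbb{R}$, and introduce on $\bar\Omega_n$ the two homotopies
\begin{align*}
H^{(1)}_t(v+sy_n) &= A_{n-1}(v+sy_n) + [(1-t)\langle A[v+sy_n],i(y_n)\rangle + ts]\,y_n,\\
H^{(2)}_t(v+sy_n) &= A_{n-1}(v+(1-t)sy_n) + sy_n,
\end{align*}
valued in $Y_n$, so that $H^{(1)}_0=A_n$, $H^{(1)}_1=H^{(2)}_0$, and $H^{(2)}_1=\tilde A_n$.

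The crux is to show that for $n$ large, both homotopies are admissible, i.e., $0\notin H^{(\ell)}_t[\partial\Omega_n]$ for $(\ell,t)\in\{1,2\}\times[0,1]$. For $H^{(2)}_t$, any zero $u_n=v_n+s_ny_n\in\partial\Omega_n$ forces $s_n=0$ and $A_{n-1}(v_n)=0$, so $(v_n)\subset\partial\Omega\cap Y_{n-1}$ satisfies $A[v_n]\stackrel{Y_{n-1}}=0$; Lemma~\ref{0-lem}, applied with the index shifted by one, yields a contradiction. For $H^{(1)}_t$, suppose toward a contradiction that $u_n=v_n+s_ny_n\in\partial\Omega_n$ and $t_n\in[0,1]$ satisfy $H^{(1)}_{t_n}(u_n)=0$. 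This forces $\langle A[u_n],i(y_k)\rangle=0$ for $k\leq n-1$, and $(1-t_n)\langle A[u_n],i(y_n)\rangle=-t_ns_n$. Extract a weakly convergent subsequence $u_n\rightharpoonup u$ in $Y$ and choose $\zeta_n\in Y_{n-1}$ with $\zeta_n\to u$ in $Y$ (possible since $\bigcup_n Y_n$ is dense). Because $\zeta_n\in Y_{n-1}$, the vanishing of the first $n-1$ components yields
\[
\langle A[u_n],i(u_n-\zeta_n)\rangle = s_n\langle A[u_n],i(y_n)\rangle = -\tfrac{t_n}{1-t_n}\,s_n^2 \leq 0
\]
(interpreted as $0$ when $t_n=1$, where necessarily $s_n=0$). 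Combined with the boundedness of $A[u_n]$ in $X^*$ and $\zeta_n\to u$, this gives $\limsup_n\langle A[u_n],i(u_n-u)\rangle\leq 0$; the $(S)_+$ property then forces $u_n\to u$ strongly, with $u\in\partial\Omega$. For arbitrary $y\in Y$, picking $\xi_n\in Y_{n-1}$ with $\xi_n\to y$ and using demi-continuity together with boundedness of $A$ gives $\langle A[u],i(y)\rangle=0$, hence $A[u]\stackrel{Y}=0$; condition $(H)$ upgrades this to $A[u]\stackrel{X}=0$, contradicting $0\notin A[\partial\Omega]$.

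By homotopy invariance of the Brouwer degree, $\deg_B(A_n,\Omega_n,0)=\deg_B(\tilde A_n,\Omega_n,0)$. The map $\tilde A_n(v+sy_n)=(A_{n-1}(v),s)$ is of product form on $Y_{n-1}\oplus\mathbb{R}y_n$, with zero set contained in $Y_{n-1}\times\{0\}$; since $\Omega_n\cap Y_{n-1}=\Omega_{n-1}$, the standard Leray--Schauder-type dimension reduction for Brouwer degree gives $\deg_B(\tilde A_n,\Omega_n,0)=\deg_B(A_{n-1},\Omega_{n-1},0)$, which closes the proof. The hardest step is the admissibility of $H^{(1)}_t$: it is precisely here that the $(S)_+$ hypothesis and condition $(H)$ must be used in tandem, paralleling the role they play in Lemma~\ref{0-lem}.
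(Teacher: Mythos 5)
Your proof is correct and follows essentially the same route as the paper's: you first rule out zeros of $A_n$ on $\partial\Omega_n$ via Lemma~\ref{0-lem}, then your homotopy $H^{(1)}_t$ is exactly the paper's convex homotopy $(1-t)A_n+tB_n$ with $B_n(u)=(A_{n-1}(u),Pr_n(u))$, and your admissibility argument reproduces the paper's key inequality $\langle A[u_n],i(u_n)\rangle=-\tfrac{t_n}{1-t_n}s_n^2\le 0$ feeding into the $(S)_+$ property and condition $(H)$. The only deviation is cosmetic: you interpose a second homotopy $H^{(2)}_t$ to pass from $B_n$ to the genuine suspension $\tilde A_n(v+sy_n)=A_{n-1}(v)+sy_n$ before invoking the dimension-reduction formula, whereas the paper applies the reduction to $B_n$ directly (legitimate since $B_n-\mathrm{id}$ is $Y_{n-1}$-valued), stating the resulting equality $\deg_B(B_n,\Omega_n,0)=\deg_B(A_{n-1},\Omega_{n-1},0)$ as ``apparent.''
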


\begin{proof}
  We first show that there is $N_0 > 0$ such that $0 \notin A_n (\partial
  \Omega_n)$ for all $n \geq N_0$. Assuming the contrary, let there exist a sequence
  $u_n \in \partial \Omega_n$ such that $A [u_n] \stackrel{Y_n}= 0$.
  As $\Omega$ is open in $Y$, we have
  $u_n \subseteq \partial \Omega$. An argument similar to the one employed in the proof of the lemma (\ref{0-lem}), implies the
  existence of $u \in \partial \Omega$ such that $A [u] = 0$, a contradiction! 
  Hence, there is $N_0 > 0$ such that the Brouwer degree of $A_n$ in $\Omega_n$
  is well defined at $0$ for $n \geq N_0$. Consider the map $B_n : \Omega_n \rightarrow
  Y_n$ defined by
  \begin{eqnarray}
    B_n (u) \assign (A_{n - 1} (u), Pr_n (u) ),
  \end{eqnarray}
  where $Pr_n (u)$ denotes the $n$-th component of $u$ in $Y_n$. Apparently, we have
  \begin{eqnarray}
    \label{An-Bn} \deg_B (A_{n - 1}, \Omega_{n - 1}, 0) = \deg_B (B_n,
    \Omega_n, 0) .
  \end{eqnarray}
  Note that the proof ends once we show that for $n$ large enough there holds
  \begin{eqnarray*}
    \deg_B (A_n, \Omega_n, 0) = \deg_B (B_n, \Omega_n, 0) .
  \end{eqnarray*}
  Consider the following convex homotopy $h (t)$:
  \begin{eqnarray}
    \label{h-hom} h_n (t) = (1 - t) A_n + t B_n .
  \end{eqnarray}
  It suffices to show that $h_n (t) (u) \neq 0$ for all $u \in \partial \Omega_n$ and for all
  $t \in [0, 1]$ when $n$ is sufficiently large. 
  Assume the contrary. Then there exists $z_n \in \partial \Omega_n$ and $t_n
  \in [0, 1]$ such that $h (t_n) (z_n) \stackrel{Y_n}= 0$. By the definition of finite rank approximation (\ref{FRA}),
  we can write $A_n(z_n)$ as
  \[
  A_n(z_n)=\left(A_{n-1}(z_n),\langle A[z_n],i(y_n) \rangle y_n \right),
  \]
  where $\{y_1,y_2,..\}$ is a frame for $Y$. Now, the relation $h(t_n)(z_n)\stackrel{Y_n}=0$ implies that
  $A[z_n]\stackrel{Y_{n-1}}=0$ and
  \begin{equation}
  (1 - t_n) \langle A [z_n], i (y_n) \rangle y_n  + t_n Pr_n (z_n)\stackrel{Y_n} = 0. \label{eq:201507021403}
  \end{equation}
  But $A[z_n]\stackrel{Y_{n-1}}=0$ implies $A_n (z_n) = r_n y_n$ for
  some $r_n \in \mathbb{R}$. This together with (\ref{eq:201507021403}) gives
  $r_n = \frac{- t_n}{1 - t_n} z_{n,n},$ where $z_{n,n}$ is the component of $z_n$ along $y_n$, that is, $Pr_n (z_n) = z_{n,n} y_n$. Therefore, we have
  \begin{equation}
    \langle A [z_n], i (z_n) \rangle = (A_n (z_n), z_n) = - \frac{t_n}{1 -
    t_n} z_{n,n}^2 \leq 0.\label{znn}
  \end{equation}
  Since $\partial \Omega$ is bounded and $(z_n)\in \partial\Omega$, there is a subsequence $(z_{n_k})$ such that
  $z_{n_k} \rightharpoonup z \in Y$. To keep notations simple, we denote this subsequence by $(z_n)$. Let $\zeta_n \in Y_{n - 1}$ be a sequence such that $\zeta_n \stackrel{Y}\to z$. We have
  \begin{eqnarray}
    \langle A [z_n], i (z_n - z) \rangle & = & \langle A [z_n], i (z_n -
    \zeta_n) \rangle + \langle A [z_n], i (\zeta_n - z) \rangle \nonumber\\
    & \leq & \langle A [z_n], i (z_n) \rangle + \| A [z_n] \|  \|
    \zeta_n - z \| \nonumber\\
    & \leq & \| A [z_n] \|  \| \zeta_n - z \| . 
  \end{eqnarray}
 The last equality follows from (\ref{znn}). Therefore
  \begin{eqnarray}
    \limsup_n  \langle A [z_n], i (z_n - z) \rangle \leq 0,
  \end{eqnarray}
  and since $A$ is $(S)_+$, we conclude $z_n \to z \in
  \partial \Omega$. Finally, following the lemma (\ref{0-lem}), we reach $A [z] = 0$,
  a contradiction!
\end{proof}

\section{Properties of the invariant}

We show that the topological invariant defined in the definition (\ref{deg-def}) satisfies the classical
properties of a topological degree. First we identify the admissible class of
homotopy.

\begin{definition}[Admissible homotopy]
  \label{hom-def}Let $\Omega$ be an open bounded subset of $Y$. A one parameter family of maps $h : [0, 1] \times \Omega
  \to X^*$ is called an admissible homotopy if it satisfies the following conditions:
  \begin{enumerate}
    \item $h : [0, 1] \times \Omega \rightarrow X^{\ast}$ is bounded and
    demi-continuous,
    
    \item $0 \not\in h ([0, 1] \times \partial \Omega)$,
    
    \item for every sequence $t_n \rightarrow t$, $t_n \in [0, 1]$ and $u_n
    \rightharpoonup u$ for $u_n \in \bar{\Omega}$, the inequality
    \[
      \limsup_{n \to \infty}  \langle h (t_n) (u_n), i (u_n - u)
      \rangle \leq 0,
    \]
    implies $u_n \to u$,
    
    \item for all $t \in [0, 1]$ we have
    
      \[
      H (t) : \{ z \in Y ; h (t) (z) \stackrel{Y}= 0 \} =
      \{ z \in Y ; h (t) (z) \stackrel{X}= 0 \} .
    \]
  \end{enumerate}
\end{definition}

\begin{thrm}
  \label{deg-pro} Let $A : \Omega \rightarrow X^{\ast}$ be a bounded,
  demi-continuous, $(S)_+$ mapping that satisfies the condition $(H)$ and furthermore,
  $0 \not\in A [\partial \Omega]$. The index of $A : \Omega
  \subset Y \to X^*$ given in the definition(\ref{deg-def}) has the following properties:
  \begin{enumerate}
    \item the equation $A [u] = 0$ is solvable in $\Omega$ if ${\rm ind} (A,
    \Omega) \neq 0$,
    
    \item if $\Omega = \Omega_1 \cup \Omega_2$ where $\Omega_1$ and $\Omega_2$ are disjoint open sets
    then
    \[
      \label{D-D} {\rm ind} (A, \Omega) = {\rm ind} (A,
      \Omega_1) + {\rm ind} (A, \Omega_2),
    \]
    \item If $h : [0, 1] \times \Omega \rightarrow X^{\ast}$ is an admissible
    homotopy then ${\rm ind} (h (t), \Omega)$ is independent of $t \in [0,
    1]$.
  \end{enumerate}
\end{thrm}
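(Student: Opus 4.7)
The plan is to reduce each of the three properties to the corresponding property of the Brouwer degree of the finite rank approximations $A_n$, using Lemma \ref{0-lem} and the finite-rank construction (\ref{FRA}) as the bridge to pass to the limit.

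For part (1), since the stabilization proposition already proved shows that $\deg_B(A_n,\Omega_n,0)$ is eventually constant and equal to ${\rm ind}(A,\Omega)$, if the index is nonzero then $\deg_B(A_n,\Omega_n,0)\ne 0$ for all $n\ge N_0$. Solvability of the Brouwer equation in each $\Omega_n$ yields $u_n\in\Omega_n\subset\bar\Omega$ with $A[u_n]\stackrel{Y_n}{=}0$. Applying Lemma \ref{0-lem} with $D=\bar\Omega$ produces $u\in\bar\Omega$ with $A[u]\stackrel{X}{=}0$, and the hypothesis $0\notin A[\partial\Omega]$ forces $u\in\Omega$.

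For part (2), the fact that $\Omega_1,\Omega_2$ are disjoint open sets with $\Omega_1\cup\Omega_2=\Omega$ means each $\Omega_i$ is clopen in $\Omega$, so $\partial\Omega_i\subset\partial\Omega$, and in particular $0\notin A[\partial\Omega_i]$. Intersecting with $Y_n$ gives $\Omega_n=(\Omega_1\cap Y_n)\cup(\Omega_2\cap Y_n)$, a disjoint union of open sets in $Y_n$. The same contradiction argument used in the stabilization proposition (running through Lemma \ref{0-lem}) shows that for $n$ large enough, $A_n$ has no zero on $\partial(\Omega_i\cap Y_n)$, so the Brouwer additivity formula applies to $A_n$ on $\Omega_n$. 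Passing to the limit in $n$ gives the claimed additivity.

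The main content is part (3). I would fix an admissible homotopy $h(t)$ and form its finite rank approximation $h_n(t,u)=\sum_{k=1}^n\langle h(t)(u),i(y_k)\rangle y_k$, which is continuous in $(t,u)$ by the joint demi-continuity of $h$. The key step is to show the existence of a single $N_0$ so that for all $n\ge N_0$, $h_n(t,u)\ne 0$ for every $(t,u)\in[0,1]\times\partial\Omega_n$. I argue by contradiction: if not, extract $n_k\to\infty$, $t_{n_k}\to t^\ast\in[0,1]$, $u_{n_k}\in\partial\Omega_{n_k}\subset\partial\Omega$ with $u_{n_k}\rightharpoonup u^\ast\in Y$ and $h(t_{n_k})(u_{n_k})\stackrel{Y_{n_k}}{=}0$. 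Choosing $\zeta_{n_k}\in Y_{n_k}$ with $\zeta_{n_k}\to u^\ast$ in $Y$, the boundedness of $h$ together with $\langle h(t_{n_k})(u_{n_k}),i(\zeta_{n_k})\rangle=0$ gives
\begin{equation*}
\limsup_{k\to\infty}\langle h(t_{n_k})(u_{n_k}),i(u_{n_k}-u^\ast)\rangle=\limsup_{k\to\infty}\langle h(t_{n_k})(u_{n_k}),i(\zeta_{n_k}-u^\ast)\rangle\le 0.
\end{equation*}
Condition (3) of Definition \ref{hom-def} upgrades this to $u_{n_k}\to u^\ast\in\partial\Omega$, and then, exactly as in the closing lines of Lemma \ref{0-lem}, demi-continuity together with a density argument along $Y_n$ shows $h(t^\ast)(u^\ast)\stackrel{Y}{=}0$; property $H(t^\ast)$ promotes this to $h(t^\ast)(u^\ast)\stackrel{X}{=}0$, contradicting $0\notin h([0,1]\times\partial\Omega)$. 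Once this uniform nondegeneracy is established, Brouwer homotopy invariance applied to $h_n(t,\cdot)$ on $\Omega_n$ gives $\deg_B(h_n(0),\Omega_n,0)=\deg_B(h_n(1),\Omega_n,0)$ for all $n\ge N_0$, and more generally equality for any two values of $t$; passing to the limit $n\to\infty$ yields the homotopy invariance of the index.

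The main obstacle is this uniform-in-$t$ contradiction argument in part (3): one needs a single threshold $N_0$ that works for every $t\in[0,1]$, rather than a separate threshold for each $t$. The joint formulation of the $(S)_+$ condition in Definition \ref{hom-def}(3), which allows $t_n$ to vary with $u_n$, is precisely what is tailored to run the extraction argument cleanly, and condition $H(t)$ plays the same role for the homotopy that $(H)$ plays in Lemma \ref{0-lem}.
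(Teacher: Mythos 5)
Your proposal is correct and takes essentially the same route as the paper: part (1) via stabilization plus Lemma \ref{0-lem}, part (2) via Brouwer additivity on the finite-rank slices, and part (3) via a contradiction/extraction argument that uses the admissible homotopy conditions (3) and $H(t)$ exactly where the paper uses them. The only difference is expository — you state the uniform nondegeneracy of $h_n$ on $[0,1]\times\partial\Omega_n$ explicitly before invoking Brouwer homotopy invariance, whereas the paper folds this into a single "assume the degrees differ" contradiction; the underlying argument is identical.
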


\begin{proof}
  To prove (1), let us assume ${\rm ind} (A, \Omega) =1$. Therefore, there is $N$ such that ${\rm deg}_B(A_n,\Omega_n,0)=1$ for all $n\geq N$. By the properties of Brouwer degree, the equation $A_n (u) = 0\in Y_n$ is solvable in $\Omega_n$. Let $(u_n)\subset
    \overline{\Omega}_n$ be a sequence such that $A_n (u_n) = 0$. Now lemma (\ref{0-lem}) implies the existence of some
     $u \in \overline{\Omega}_n$ such that $A [u] \stackrel{X}= 0$. Since $0 \not\in A [\partial \Omega]$, we
    conclude $u \in \Omega$. The second property also follows directly from the domain decomposition of the Brouwer degree. In fact, if $\Omega=\Omega_1 \cup \Omega_2$ then for sufficiently large $n$ we have
\[
{\rm ind}(A,\Omega)={\rm deg}_B(A_n,\Omega_{1,n}\cup \Omega_{2,n},0),
\]  
where $\Omega_{1,n}=\Omega_1\cap Y_n$  and $\Omega_{2,n}=\Omega_2\cap Y_n$.  By the domain decomposition of  the Brouwer degree, we have
\[
{\rm deg}_B(A_n,\Omega_{1,n}\cup \Omega_{2,n},0)={\rm deg}_B(A_n,\Omega_{1,n},0)+{\rm deg}_B(A_n, \Omega_{2,n},0).
\]
Now the claim is proved due to the relation 
\[
{\rm deg}_B(A_n,\Omega_{k,n},0)={\rm ind}(A,\Omega_k), k=1,2.
\]    
 To prove the homotopy invariance property of the defined index, assume
\begin{equation}
    {\rm ind} (h (t),\Omega) \neq {\rm ind} (h (s),\Omega),
\end{equation}
for some $s,t\in[0,1]$. Let $n$ be so large such that the following relations hold
    \begin{eqnarray}
      \label{hom-1} {\rm ind} (h (t),\Omega) = \deg_B (h_n (t),
      \Omega_n, 0),
    \end{eqnarray}
    \begin{eqnarray}
      \label{hom-2} {\rm ind} (h (s),\Omega) = \deg_B (h_n(s),
      \Omega_n, 0),
    \end{eqnarray}
   Therefore, there exist $\tau_n \in [s, t]$ and $\zeta_n \in \partial \Omega$
    such that $h (\tau_n)  (\zeta_n) \stackrel{Y_n}= 0$. Since $(\zeta_n)$ is
    bounded, by an argument similar to the proof of the lemma (\ref{0-lem}), we
    derive the inequality
    \begin{eqnarray}
      \limsup_{n \rightarrow \infty}  \langle h (\tau_n) (\zeta_n), i (\zeta_n
      - \zeta) \rangle \leq 0,
    \end{eqnarray}
    for some $\zeta \in Y$. The condition $(3)$ in the
    definition (\ref{hom-def}) implies $\zeta_n \to \zeta \in
    \partial \Omega$. Therefore, $h (\tau) (\zeta) \stackrel{Y}= 0$ for some $\tau $, a limit point of the sequence
    $(\tau_n)\subset [s, t]$. The condition $H(t)$ 
    implies $h (\tau) (\zeta) \stackrel{X}= 0$ which contradicts
    the condition $(2)$ of the definition (\ref{hom-def}). Therefore there exist $N_0 > 0$ such
    that 
    \begin{eqnarray}
      {\rm ind} (h_n (t), \Omega) = {\rm ind} (h_n (s), \Omega), \forall n\geq N_0.
    \end{eqnarray}
   and thus the index is independent of the homotopies in the class of admissible homotopy defined in the definition (\ref{hom-def}).   
\end{proof}

\section{The stationary solutions of Doi-Onsager equation in $\mathbb{R}^2$}
Here we study the one dimensional Doi-Onsager equation, and employ the suggested degree in this article. As we have shown earlier in \cite{Niksirat14}, this problem is reduced to the fixed point problem  
\begin{equation}
A[u]:=u-\lambda \Gamma[u]
\end{equation}
where
\begin{equation}
\Gamma[u]=\left(\int_{0}^{2\pi}e^{-u(\theta)}d⁣\theta\right)^{-1}\int_0^{2\pi}\hat{K}(\theta-\theta') e^{-u(\theta')} d⁣\theta'.
\end{equation}
The kernel $\hat{K}$ is assumed to be in $ W^{1,\infty}[0,2\pi])$ having the following expansion
\begin{equation}
\hat{K}(\theta)=\sum_{n=1}^\infty{k_n \cos(2 n \theta)},
\end{equation}
where $k_n<0$ for all $n$ and they satisfy the condition
$
k_1<k_2<k_3<\cdots.
$
It is obviously seen that the Onsager's kernel $\hat{K}(\theta)=|\sin(\theta)|-\frac{2}{\pi}$ satisfies the above assumptions. Let $Y$ be the space
\[
Y=\left\{u\in W^{1,2}([0,2\pi]),u(\theta)=u(\pi+\theta) a.e.,u(\theta)=u(2\pi-\theta)  a.e.,\int_0^{2\pi}u(\theta)d⁣\theta=0\right\}.
\]
It is seen that $A:Y\to Y$ is a continuous map in the class of $(S)_+$ mappings. In order to simplify calculations for the degree argument, we employ the constructed degree above and study the map $A:Y\to X$ where $X$ is the Hilbert space
\[
X=\left\{u\in L^2([0,2\pi]),u(\theta)=u(\pi+\theta) a.e.,u(\theta)=u(2\pi-\theta)  a.e.,\int_0^{2\pi}u(\theta)d⁣\theta=0\right\}.
\]

\begin{thrm}[Niksirat \cite{Niksirat14}]
Assume that $\hat{K}\in W^{1,\infty}([0,2\pi])$. Then $\Gamma:Y\to Y$ is continuous and compact.
\end{thrm}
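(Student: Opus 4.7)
The plan relies on the Sobolev chain $Y \hookrightarrow W^{1,2}([0,2\pi]) \hookrightarrow C([0,2\pi])$, where the second embedding is compact by the Rellich--Kondrachov theorem in one dimension. Consequently, every $u\in Y$ is continuous and bounded, and on any bounded subset $B\subset Y$ one has uniform bounds $\|u\|_\infty\le C_B$, which in turn give the two-sided bound $0<c_B\le e^{-u}\le C_B'$ uniformly. This yields the lower bound $Z(u):=\int_0^{2\pi}e^{-u}\,d\theta\ge 2\pi c_B>0$, so the denominator of $\Gamma$ is stably positive.

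I would first check that $\Gamma$ maps $Y$ into $Y$. The Fourier expansion $\hat K(\theta)=\sum_{n\ge 1} k_n\cos(2n\theta)$ shows that $\hat K$ is even, $\pi$-periodic, and has vanishing integral over $[0,2\pi]$. A direct change of variables in the convolution, using the symmetries $u(\theta)=u(\theta+\pi)$ and $u(\theta)=u(2\pi-\theta)$, gives $\Gamma[u](\theta+\pi)=\Gamma[u](\theta)$ and $\Gamma[u](2\pi-\theta)=\Gamma[u](\theta)$; Fubini and the vanishing mean of $\hat K$ give $\int \Gamma[u]\,d\theta=0$. Since $\hat K\in W^{1,\infty}$, differentiation under the integral sign is justified and yields
\begin{equation*}
(\Gamma[u])'(\theta)=\frac{1}{Z(u)}\int_0^{2\pi}\hat K'(\theta-\theta')\,e^{-u(\theta')}\,d\theta',
\end{equation*}
which is bounded by $\|\hat K'\|_\infty\,\|e^{-u}\|_\infty/(2\pi c_B)$. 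Therefore $\Gamma[u]\in W^{1,\infty}\subset W^{1,2}$, so $\Gamma[u]\in Y$.

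For continuity, let $u_n\to u$ in $Y$. The compact embedding into $C([0,2\pi])$ gives $u_n\to u$ uniformly, hence $e^{-u_n}\to e^{-u}$ uniformly, and $Z(u_n)\to Z(u)$. The bound
\begin{equation*}
\bigl|(\hat K*e^{-u_n})(\theta)-(\hat K*e^{-u})(\theta)\bigr|\le 2\pi\,\|\hat K\|_\infty\,\|e^{-u_n}-e^{-u}\|_\infty
\end{equation*}
and the analogous one with $\hat K'$ in place of $\hat K$ give uniform convergence of both $\Gamma[u_n]$ and $(\Gamma[u_n])'$ on the bounded interval $[0,2\pi]$; this upgrades to $L^2$ convergence of each, hence $\Gamma[u_n]\to\Gamma[u]$ in $W^{1,2}$, proving continuity in $Y$.

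Compactness now follows quickly. Given a bounded sequence $(u_n)\subset Y$, Rellich--Kondrachov extracts a subsequence converging uniformly to some $u\in C([0,2\pi])$. Repeating the estimates of the previous paragraph with $u_n\to u$ understood in $C$ rather than in $W^{1,2}$, one still obtains uniform convergence of $\Gamma[u_{n_k}]$ and of $(\Gamma[u_{n_k}])'$, hence $W^{1,2}$-convergence of $\Gamma[u_{n_k}]$ in $Y$. Thus $\Gamma$ maps bounded sets into relatively compact sets. The only mildly delicate point is that the regularity of the image is controlled by $\hat K'$ rather than by the regularity of $u$ itself: the $W^{1,\infty}$ hypothesis on $\hat K$ is what transfers mere uniform convergence of $e^{-u_n}$ to $W^{1,2}$-convergence of $\Gamma[u_n]$, and this is where the argument would break down if $\hat K$ were only assumed continuous.
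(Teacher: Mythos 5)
The paper does not prove this theorem; it is imported from \cite{Niksirat14} as a cited result, so there is no internal proof here to compare against. Your argument is correct and is the natural one: the compact one-dimensional embedding $W^{1,2}([0,2\pi])\hookrightarrow C([0,2\pi])$ gives uniform two-sided control of $e^{-u}$ and of the normalization $Z(u)$ on bounded sets; the Fourier structure of $\hat K$ (only cosine modes in $2n\theta$, zero mean) verifies that $\Gamma[u]$ satisfies the three constraints defining $Y$; and $\hat K\in W^{1,\infty}$ is exactly what converts mere uniform convergence of $e^{-u_n}$ into $W^{1,\infty}$--hence $W^{1,2}$--convergence of $\Gamma[u_n]$, yielding both continuity and compactness via the same estimate. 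The one step worth stating explicitly in the compactness part is that the $W^{1,2}$-limit of $\Gamma[u_{n_k}]$ lies in $Y$; this is immediate because $Y$ is a norm-closed subspace of $W^{1,2}$ and all $\Gamma[u_{n_k}]\in Y$, so no further work is needed. Apart from an immaterial $2\pi$ in the pointwise bound on $(\Gamma[u])'$, the proposal is sound.
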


The following proposition guarantees that all the solutions of $A[u]=0$ are in the space $Y$.

\begin{prop}
Assume $u\in X$ and $A[u]=0$, then $u\in Y$.
\end{prop}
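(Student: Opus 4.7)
The plan is a one-shot bootstrap: rewrite $A[u]=0$ as $u = \lambda\Gamma[u]$ and show that the right-hand side automatically lies in $Y$, regardless of how rough $u\in X$ was to start with. The key is that $\Gamma$ involves only $\hat K$ and a probability density built from $u$, so the regularity of $\Gamma[u]$ is driven entirely by the regularity of $\hat K$.

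The starting observation is that, whenever $e^{-v}\in L^1(0,2\pi)$,
\[
\Gamma[v](\theta) \;=\; \int_0^{2\pi}\hat K(\theta-\theta')\,d\mu_v(\theta'),\qquad d\mu_v(\theta')\;:=\;\frac{e^{-v(\theta')}\,d\theta'}{\int_0^{2\pi}e^{-v(\theta'')}\,d\theta''},
\]
so $\Gamma[v](\theta)$ is a probability-weighted average of values of $\hat K$. In particular $\|\Gamma[v]\|_{L^\infty}\le\|\hat K\|_{L^\infty}$. Applied to our $u$, the identity $u=\lambda\Gamma[u]$ forces $u\in L^\infty$ with $\|u\|_\infty\le |\lambda|\|\hat K\|_\infty$. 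Since $\hat K\in W^{1,\infty}$, I would then differentiate under the integral sign (or equivalently dominate the difference quotients of $\Gamma[u]$ by $\|\hat K'\|_\infty$) to conclude $\Gamma[u]\in W^{1,\infty}\subset W^{1,2}([0,2\pi])$, and hence $u\in W^{1,2}$.

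It remains to verify the symmetry and mean-zero constraints that cut $Y$ out of $W^{1,2}$. The Fourier series $\hat K(\theta)=\sum_{n\ge 1}k_n\cos(2n\theta)$ makes $\hat K$ even and $\pi$-periodic, and has zero constant term. Because $u\in X$ already satisfies $u(\theta)=u(\pi+\theta)=u(2\pi-\theta)$, the density $e^{-u(\theta')}$ inherits the same two symmetries, and changes of variable $\theta'\mapsto \pi+\theta'$ and $\theta'\mapsto -\theta'$ inside the convolution transfer them to $\Gamma[u]$. Fubini gives
\[
\int_0^{2\pi}\Gamma[u](\theta)\,d\theta \;=\; \int_0^{2\pi}\Bigl(\int_0^{2\pi}\hat K(\theta-\theta')\,d\theta\Bigr)\,d\mu_u(\theta')\;=\;0.
\]
Combining these facts, $\lambda\Gamma[u]\in Y$, and $u=\lambda\Gamma[u]$ concludes the argument.

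The one delicate point, and the main obstacle to writing this cleanly, is the very first step: to even parse $\Gamma[u]$ as a function one needs $e^{-u}\in L^1$, which is \emph{not} automatic for a generic $u\in L^2$ (for instance $u(\theta)\sim -\log|\sin\theta|$ lies in $L^2$ but has $e^{-u}\notin L^1$). I would handle this by reading the hypothesis $A[u]=0$ as incorporating the requirement that $\Gamma[u]$ is defined as an element of $X$; once that foothold is granted, the $L^\infty$ bound $\|\Gamma[u]\|_\infty\le\|\hat K\|_\infty$ takes over immediately and the rest of the bootstrap runs without any further hypothesis on $u$.
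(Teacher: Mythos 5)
Your bootstrap is essentially the paper's proof: rewrite $A[u]=0$ as $u=\lambda\Gamma[u]$ and use $\hat K\in W^{1,\infty}$ to pass the derivative onto the kernel, giving the bound $\|u'\|\le 2\pi\lambda\|\hat K'\|_\infty$ and hence $u\in W^{1,2}$, while the symmetry and mean-zero constraints come for free because $u\in X$ already carries them. Your closing caveat about needing $e^{-u}\in L^1$ for $\Gamma[u]$ to make sense is a valid observation that the paper leaves implicit.
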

\begin{proof}
We have
\begin{equation}
u(\theta)=\lambda \left(\int_{0}^{2\pi}e^{-u(\theta)}d⁣\theta\right)^{-1} \int_0^{2\pi} \hat{K}(\theta-\theta') e^{-u(\theta')} d\theta'.
\end{equation}
We have
\begin{equation}
||\frac{d}{d\theta} u||^2=\lambda^2 \left(\int_{0}^{2\pi}e^{-u(\theta)}d⁣\theta\right)^{-2} 
\int_0^{2\pi}\left[\int_0^{2\pi}\frac{d}{d\theta} \hat{K}(\theta-\theta') e^{-u(\theta')} d\theta'\right]^2 d\theta.
\end{equation}
Since $\hat{K}\in W^{1,\infty}([0,2\pi])$, then
\begin{equation}
||\frac{d}{d\theta} u|| \leq 2\pi \lambda ||\frac{d}{d\theta} \hat{K}||_\infty< \infty
\end{equation}
\end{proof}

\begin{lem}
For the map $A:Y\to Y^*$, let the finite rank approximation $\tilde{A}_n:Y\to Y_n$ be defined by the relation
\begin{equation}
\tilde{A}_n(u)=\sum_{k=1}^n{ \langle A[u],y_k \rangle_Y  y_k},
\end{equation}
and let $\Omega\subset Y$ be an open bounded subset. If $0\nin A(\partial\Omega)$, then we have the following relation
\begin{equation}
\lim_{n\to\infty} \deg(\tilde{A}_n, \Omega_n,0)=\lim_{n\to\infty} \deg(A_n,\Omega_n,0),
\end{equation}
where $A_n$ is the finite rank approximation of $A:Y\to X$ defined in Def.(\ref{FRA}), and $\Omega_n:=\Omega\cap Y_n$.
\end{lem}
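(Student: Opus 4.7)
The natural strategy is to apply Brouwer's homotopy invariance to the straight-line homotopy
\[
H_n(t,u) := (1-t)\,\tilde A_n(u) + t\,A_n(u),\qquad t\in[0,1],
\]
between the two finite-rank approximations on $\Omega_n$. Once we show that, for $n$ large enough, $H_n(t,u)\neq 0$ on $\partial\Omega_n$ for every $t\in[0,1]$, we obtain $\deg(\tilde A_n,\Omega_n,0)=\deg(A_n,\Omega_n,0)$ and the asserted equality of limits follows.

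I would prove the nonvanishing of $H_n$ on the boundary by contradiction. Suppose, after extraction, $H_{n_k}(t_k,u_k)=0$ for some $n_k\to\infty$, $t_k\to t^*\in[0,1]$ and $u_k\in\partial\Omega_{n_k}\subset\partial\Omega$. Boundedness of $\Omega$ gives $u_k\rightharpoonup u$ in $Y$; the compact embedding $Y\hookrightarrow X$ (Rellich, in the application) upgrades this to $u_k\to u$ strongly in $X$. Testing the equation $H_{n_k}(t_k,u_k)=0$ against $u_k\in Y_{n_k}$ and against an approximating sequence $\zeta_k\in Y_{n_k}$ with $\zeta_k\to u$ in $Y$ (available by density of $\bigcup_n Y_n$ in $Y$), and then subtracting, one obtains, after absorbing the $\zeta_k-u$ contribution into an $o(1)$ via boundedness of $A[u_k]$,
\[
(1-t_k)\,\langle A[u_k],u_k-u\rangle_Y + t_k\,\langle A[u_k],i(u_k-u)\rangle_X \longrightarrow 0.
\]
Boundedness of $A[u_k]$ in $X^*$ combined with $u_k\to u$ in $X$ forces the second pairing to vanish, so $(1-t_k)\,\langle A[u_k],u_k-u\rangle_Y\to 0$.

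If $t^*<1$, the factor $(1-t_k)$ is bounded below, hence $\langle A[u_k],u_k-u\rangle_Y\to 0$ and the classical $(S)_+$ property of $A:Y\to Y^*$ forces $u_k\to u$ strongly in $Y$; in particular $u\in\partial\Omega$. Passing to the limit in the component equations, with $v_k\in Y_{n_k}$ approximating an arbitrary $v\in Y$, yields
\[
(1-t^*)\,\langle A[u],v\rangle_Y + t^*\,\langle A[u],i(v)\rangle_X = 0\quad\text{for all } v\in Y.
\]
Choosing $v=A[u]$, identified with its Riesz representative in $Y$, gives $(1-t^*)\|A[u]\|_Y^2+t^*\|iA[u]\|_X^2=0$, which, by injectivity of $i$, forces $A[u]=0$ and contradicts $0\nin A(\partial\Omega)$.

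The hard part is the degenerate branch $t^*=1$, where the vanishing prefactor $(1-t_k)$ supplies no leverage on $\langle A[u_k],u_k-u\rangle_Y$. The plan there is to read off $A_{n_k}(u_k)=-\tfrac{1-t_k}{t_k}\tilde A_{n_k}(u_k)\to 0$ directly from the homotopy identity (using boundedness of $\tilde A_{n_k}(u_k)$), which places us essentially in the hypothesis of Lemma \ref{0-lem}; mimicking its proof --- using the compact embedding to bound $\langle A[u_k],i(u_k-u)\rangle_X$ by a vanishing quantity and invoking the $(S)_+$ property --- one still concludes $u_k\to u$ in $Y$ and completes the contradiction exactly as above.
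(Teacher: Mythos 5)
Your proposal follows the same strategy as the paper: run a straight-line homotopy between $\tilde A_n$ and $A_n$ on $\Omega_n$ and invoke Brouwer homotopy invariance, then derive a contradiction by extracting a weakly convergent subsequence, upgrading it to strong $X$-convergence through the compact embedding, using the $(S)_+$ property to get strong $Y$-convergence, and finally showing $A[u]=0$ at a boundary point. Your parametrization is the reverse of the paper's ($(1-t)\tilde A_n + t A_n$ versus $t\tilde A_n + (1-t)A_n$), but that is cosmetic. Two places where you improve on the paper's exposition: you explicitly split into the regimes $t^*<1$ and $t^*=1$, whereas the paper writes the factor $-\frac{1-\tau}{\tau}$ without discussing what happens when $\tau=0$; and your closing move (take $v$ to be $A[u]$, obtaining $(1-t^*)\|A[u]\|_Y^2+t^*\|A[u]\|_X^2=0$) is an abstract step that replaces the paper's application-specific identity $\langle A[z],y_n\rangle_Y=(1+4n^2)\langle A[z],y_n\rangle_X$. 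The paper also uses compactness of $\Gamma$ directly rather than the $(S)_+$ property of $A:Y\to Y^*$; these are essentially interchangeable in this application.

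There is, however, a genuine gap in your treatment of the degenerate branch $t^*=1$. You write that $A_{n_k}(u_k) = -\tfrac{1-t_k}{t_k}\,\tilde A_{n_k}(u_k) \to 0$ ``using boundedness of $\tilde A_{n_k}(u_k)$.'' That boundedness is not available. The finite-rank images $\tilde A_n(u)$ are not uniformly bounded in $n$: in the inner product $(\,,\,)$ on $Y_n$ that makes $\{y_1,\dots,y_n\}$ orthonormal, $\|\tilde A_n(u)\|^2 = \sum_{j\le n}\langle A[u],y_j\rangle_Y^2$, and in the application ($Y=W^{1,2}$, $y_j = \cos(2j\theta)$) this partial sum is comparable to a $W^{2,2}$-norm of $A[u]$, which is generally infinite for $A[u]\in W^{1,2}$. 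Even if it were bounded, convergence of the finite-rank images in a norm that changes with $n$ is not what Lemma \ref{0-lem}'s proof uses. The correct way to carry out your plan is to argue pairing-wise, never taking norms of $\tilde A_n(u_k)$: pick $\zeta_k\in Y_{n_k}$ with $\zeta_k\to u$ in $Y$, set $y = u_k - \zeta_k\in Y_{n_k}$ in the component identity to get
\[
\langle A[u_k],\,i(u_k-\zeta_k)\rangle_X \;=\; -\frac{1-t_k}{t_k}\,\langle A[u_k],\,u_k-\zeta_k\rangle_Y,
\]
observe that the $Y$-pairing on the right is bounded by $\|A[u_k]\|_{Y^*}\,\|u_k-\zeta_k\|_Y$ (bounded, since $A$ is bounded and $\Omega$ is bounded), so the right-hand side tends to zero as $\tfrac{1-t_k}{t_k}\to 0$. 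Together with $\langle A[u_k],i(\zeta_k-u)\rangle_X\to 0$ (boundedness in $X^*$), this gives $\langle A[u_k],i(u_k-u)\rangle_X\to 0$, and the $(S)_+$ property finishes as in Lemma \ref{0-lem}. One further minor caveat: the identity $\langle A[u],i(v)\rangle_X = \|iA[u]\|_X^2$ for $v$ equal to the Riesz representative of $A[u]$ in $Y$ is not automatic in the abstract setting; it uses that $A[u]$ is simultaneously a concrete element of $Y\hookrightarrow X$, as it is in this application where $A[u]=u-\lambda\Gamma[u]$, so the Riesz representatives in $Y$ and in $X$ are literally the same function.
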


\begin{proof} Otherwise there is a sequence $z_n\in \partial\Omega_n$, and a sequence $t_n\in (0,1)$ such that
\[
t_n \tilde{A}_n(z_n)+(1-t_n)A_n(z_n)=0.
\]
Since $\Omega$ is bounded, there is a subsequence (we still denote as $z_n$) such that $z_n \rightharpoonup z$. 
For any $y \in Y_n$ we have
\[ \langle A [ z_n], y \rangle_Y = - \frac{1 - t_n}{t_n} \langle A [ z_n], y
   \rangle_X . \]
Since $Y \hookrightarrow C ( [ 0, 2 \pi])$ is a compact embedding, then there
is a subsequence (we still denote as $z_n$) such that $z_n \xrightarrow{X} z$,
and thus
\begin{eqnarray*}
 \limsup_{n \rightarrow \infty} \langle A [ z_n], y \rangle_Y = - \frac{1 -
   \tau}{\tau} \langle A [ z], y \rangle_X, 
\end{eqnarray*}
for some $\tau \in [ 0, 1]$. Let $\zeta_n \in Y_n$ such that $\zeta_n
\xrightarrow{Y} z$, and thus
\begin{eqnarray*}
\limsup_{n \rightarrow \infty} \langle A [ z_n], z_n - z \rangle_Y =
   \limsup_{n \rightarrow \infty} \langle A [ z_n], z_n - \zeta_n \rangle_Y =\\=
   - \frac{1 - \tau}{\tau} \limsup_{n \rightarrow \infty} \langle A [ z_n],
   z_n - z \rangle_X \leq 0.
   \end{eqnarray*}
On the other hand, $\Gamma$ is a compact mapping and therefore
\[ \limsup_{n \rightarrow \infty} \langle A [ z_n], z_n - z \rangle_Y =
   \limsup_{n \rightarrow \infty} \langle z_n, z_n - z \rangle_Y \leq 0, \]
that implies $z_n \xrightarrow{Y} z \in \partial \Omega$.
But for $y_n= \cos(2 n \theta)$ we have
\[
\langle A[z],y_n\rangle_Y=(1+4n^2) \langle A[z],y_n\rangle_X
\]
that holds only if $A[z]=0$ and thus $A[z]=0$. Since $z\in \partial\Omega$, this is a contradiction.
\end{proof}

Now we state the main result of the solution of the equation $A[u]=0$.

\begin{thrm}\label{M-Thm}
The equation $u-\lambda \Gamma[u]=0$ has the unique trivial solution $u\equiv 0$ for 
\[
0<\lambda<\lambda_0:=||\hat{K}||_\infty^{-1},
\] 
and two solutions bifurcate from the trivial solution at $\lambda_n:=-\frac{2}{k_n}$ for $n \geq 1$. The trivial solution is stable for $\lambda< \lambda_1$, and is unstable for $\lambda>\lambda_1$.
\end{thrm}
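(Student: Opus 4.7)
My plan is to prove the three assertions in order, combining a direct Lipschitz-type argument near the trivial solution with a degree-theoretic bifurcation analysis based on the index constructed in this paper.

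For the \emph{uniqueness} claim on $0 < \lambda < \lambda_0$, the key observation is that $\Gamma[u](\theta)$ is a probability-weighted average of $\hat{K}(\theta - \cdot)$, giving the pointwise bound $|\Gamma[u](\theta)| \leq \|\hat{K}\|_\infty$. Any solution therefore satisfies $\|u\|_\infty = \lambda\|\Gamma[u]\|_\infty \leq \lambda\|\hat{K}\|_\infty < 1$. To upgrade this to uniqueness, I would differentiate $\Gamma$ under the integral sign; using that elements of $Y$ have zero mean, one obtains $\|D\Gamma[u]\|_{L^\infty\to L^\infty} \leq \|\hat{K}\|_\infty(1+o(1))$ as $u\to 0$, and the mean-value theorem then shows $I-\lambda\Gamma$ is a contraction on the apriori ball $\{\|u\|_\infty \leq \lambda\|\hat{K}\|_\infty\}$ for $\lambda<\lambda_0$. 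Hence $u\equiv 0$ is the only solution.

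For the \emph{bifurcation} at $\lambda_n$, I would first diagonalize the linearization. A first-order expansion of $e^{-\epsilon v}$ and the Fourier series of $\hat{K}$ yield
\[
D\Gamma[0]\cos(2m\theta) = -\frac{k_m}{2}\cos(2m\theta),
\]
so $DA_\lambda[0] = I - \lambda D\Gamma[0]$ is diagonal in the basis $\{\cos(2m\theta)\}_{m\geq 1}$ with simple eigenvalues $\mu_m(\lambda) = 1 + \lambda k_m/2$; at $\lambda_n = -2/k_n$ exactly $\mu_n$ crosses zero transversally. By Definition~\ref{deg-def} (together with the preceding lemma identifying the new finite-rank degree with the classical Browder one), ${\rm ind}(A_\lambda, B_\epsilon(0))$ is the stable limit of Brouwer degrees $\deg_B((A_\lambda)_N, B_\epsilon(0)\cap Y_N, 0)$, whose sign for $\epsilon$ small and $N\geq n$ equals that of $\prod_{m=1}^N \mu_m(\lambda)$. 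This product flips sign as $\lambda$ crosses $\lambda_n$, so the Krasnoselskii bifurcation principle, an immediate corollary of the homotopy invariance proved in Theorem~\ref{deg-pro}, forces a continuous branch of nontrivial solutions to emanate from $(\lambda_n, 0)$. A local Lyapunov--Schmidt reduction along the simple eigendirection $\cos(2n\theta)$ parameterizes this branch by $s\in(-\delta,\delta)$ with $u(s)=s\cos(2n\theta)+o(s)$, giving the two bifurcating solutions for $s>0$ and $s<0$. The \emph{stability} claim is then immediate from the spectrum: $u\equiv 0$ is linearly stable iff every $\mu_m(\lambda)>0$, and since $k_1<k_2<\cdots<0$ the first threshold is $\lambda = -2/k_1 = \lambda_1$; above $\lambda_1$, $\mu_1$ turns negative and the trivial solution becomes unstable.

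\textbf{Main obstacle.} The delicate step is the finite-rank Jacobian computation underlying the sign change of the index at each $\lambda_n$. Because the approximation $A_N$ is built from the $X$-pairing $\langle A[u], i(y_k)\rangle$ rather than the $Y$-inner product, the Jacobian of $(A_\lambda)_N$ at $0$ is not the Galerkin matrix of $DA_\lambda[0]$ in the $Y$-basis but a mixed matrix reflecting the embedding $Y\hookrightarrow X$. I would resolve this by choosing $\{y_k\}$ proportional to $\cos(2k\theta)$---a system jointly orthogonal in $X$ and $Y$---so that the Jacobian becomes diagonal with entries proportional to $\mu_k(\lambda)/(1+4k^2)$; the positive weights do not affect the sign, making the sign change across each $\lambda_n$ explicit. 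The properties of the index from Section~3 then complete the bifurcation argument.
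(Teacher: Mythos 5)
Your bifurcation and stability arguments follow the same spirit as the paper: the linearization is diagonal in the $\cos(2m\theta)$ basis with eigenvalues $\mu_m(\lambda)=1+\lambda k_m/2$, a simple eigenvalue crosses zero transversally at each $\lambda_n=-2/k_n$, and one invokes a simple-eigenvalue bifurcation theorem (the paper cites Theorem 28.3 of Deimling rather than Krasnoselskii plus Lyapunov--Schmidt, but the mechanism is the same). Your ``main obstacle'' paragraph is well taken, and your resolution --- choosing $\{y_k\}$ proportional to $\cos(2k\theta)$ so that the two finite-rank Jacobians are simultaneously diagonal with positive rescaling factors $1/(1+4k^2)$ --- is precisely what the paper's lemma relating $\tilde A_n$ to $A_n$ makes rigorous.

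The gap is in the \emph{uniqueness} argument. You propose a contraction estimate with the bound $\|D\Gamma[u]\|_{L^\infty\to L^\infty}\le\|\hat K\|_\infty(1+o(1))$ as $u\to0$, derived from the fact that $v\in Y$ has zero mean (so that the first term of the G\^{a}teaux derivative vanishes at $u=0$). But you must apply this on the whole a priori ball $\{\|u\|_\infty\le\lambda\|\hat K\|_\infty\}$, whose radius tends to $1$ as $\lambda\uparrow\lambda_0$, not to $0$. Tracking the first term, one finds $\bigl|\int v\,d\mu_u\bigr|\le e^{2\|u\|_\infty}\|u\|_\infty\|v\|_\infty$, so the Lipschitz constant of $\lambda\Gamma$ on the ball is bounded only by $t(1+e^{2t}t)$ with $t=\lambda\|\hat K\|_\infty$, which exceeds $1$ well before $t=1$. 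Hence the mean-value/contraction argument, as you state it, proves uniqueness only for $\lambda$ in a strictly smaller range than $(0,\lambda_0)$.

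What the paper does instead is to invoke the Gr\"uss-type inequality (Lemma~\ref{G-lem}): for a probability measure and $f,g\in L^\infty$, $\bigl|\int f\,d\mu\int g\,d\mu-\int fg\,d\mu\bigr|\le\|f\|_\infty\|g\|_\infty$. Applied to the full expression for $D\Gamma[u](v)$ (with no smallness of $u$ required), it gives the \emph{uniform} bound $\|D\Gamma[u]\|_{L^\infty\to L^\infty}\le\|\hat K\|_\infty$. With this bound, uniqueness for $\lambda<\lambda_0$ does follow cleanly by contraction (and that is a legitimate alternative to the paper's route); the paper instead deduces from the same bound that each zero is isolated with local index $+1$ (Lemma~\ref{Isol}), and combines this with the total degree being $1$ (homotopy $h_t=\mathrm{Id}-t\lambda\Gamma$). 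Either way, the Gr\"uss inequality is the missing ingredient your proposal needs; the $(1+o(1))$ estimate alone does not carry you all the way to $\lambda_0$.
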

For the proof of theorem, we need the following Gr\"{u}ss type of inequality that the reader can see \cite{Dragomir} for a proof.

\begin{lem}
  \label{G-lem}Assume that $d \mu$ is a probability measure and $f, g \in
  L_{\infty} ( \Omega)$, then
  \[ \left| \int_{\Omega} f d \mu \int g d \nocomma \mu - \int f g d \mu
     \right| \leq \| f \|_{\infty} \| g \|_{\infty} \]
\end{lem}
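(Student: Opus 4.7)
The plan is to recognize the left-hand side as the absolute value of the covariance of $f$ and $g$ with respect to the probability measure $\mu$, and then reduce via Cauchy--Schwarz to a trivial bound on the variance of each factor. Write $\bar f := \int_{\Omega} f\, d\mu$ and $\bar g := \int_{\Omega} g\, d\mu$.

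First I would verify the centering identity
\[
\int_{\Omega} fg\, d\mu - \int_{\Omega} f\, d\mu \int_{\Omega} g\, d\mu = \int_{\Omega} (f - \bar f)(g - \bar g)\, d\mu,
\]
which follows by expanding the right-hand side and using that $\mu$ has total mass one (so that $\int \bar f \bar g\, d\mu = \bar f \bar g$, $\int f\, \bar g\, d\mu = \bar f \bar g$, and similarly for the symmetric term). This is the one place where the probability-measure hypothesis is structurally needed.

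Next, I would apply the Cauchy--Schwarz inequality in $L^2(\mu)$ to obtain
\[
\left| \int_{\Omega} (f - \bar f)(g - \bar g)\, d\mu \right| \leq \|f - \bar f\|_{L^2(\mu)}\, \|g - \bar g\|_{L^2(\mu)}.
\]
To finish, I would bound each centered $L^2$-norm by the essential supremum. Expanding the square and using $\int_{\Omega} d\mu = 1$ a second time gives
\[
\int_{\Omega} (f - \bar f)^2\, d\mu = \int_{\Omega} f^2\, d\mu - \bar f^{\,2} \leq \int_{\Omega} f^2\, d\mu \leq \|f\|_\infty^2,
\]
and identically for $g$. Multiplying the two factors yields the claimed bound $\|f\|_\infty \|g\|_\infty$.

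There is no real obstacle here: the argument is a short, classical chain consisting of the covariance identity, Cauchy--Schwarz, and the trivial estimate $\mathrm{Var}(f) \leq \|f\|_\infty^2$. It is worth noting that the stated inequality is a weakening of the sharper classical Gr\"uss inequality, which replaces $\|f\|_\infty \|g\|_\infty$ by $\tfrac{1}{4}(M_f - m_f)(M_g - m_g)$ in terms of essential sup/inf; the weaker version stated here does not require that refinement and in particular avoids the auxiliary quadratic estimate $(\bar f - m_f)(M_f - \bar f) \leq \tfrac{1}{4}(M_f - m_f)^2$.
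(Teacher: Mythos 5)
Your proof is correct. Note that the paper does not actually prove this lemma at all: it is stated as a known Gr\"uss-type inequality and the reader is referred to Dragomir's paper, so there is no in-text argument to compare against. Your self-contained chain --- the centering identity $\int fg\,d\mu-\int f\,d\mu\int g\,d\mu=\int(f-\bar f)(g-\bar g)\,d\mu$ (valid precisely because $\mu(\Omega)=1$), Cauchy--Schwarz in $L^2(\mu)$ (applicable since $f,g\in L_\infty$ and $\mu$ is finite), and the trivial bound $\int(f-\bar f)^2\,d\mu\le\int f^2\,d\mu\le\|f\|_\infty^2$ --- is a complete and elementary derivation of the stated estimate, and it is genuinely needed in this form: a naive triangle-inequality bound on the two terms separately would only give the constant $2\|f\|_\infty\|g\|_\infty$. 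Your closing remark is also accurate: the sharp Gr\"uss constant $\tfrac14(M_f-m_f)(M_g-m_g)$ is stronger than what the lemma asserts (since $\tfrac14(M_f-m_f)(M_g-m_g)\le\|f\|_\infty\|g\|_\infty$), and the weaker version suffices for the paper's application, so avoiding the Popoviciu-type variance refinement is a reasonable simplification rather than a gap.
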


\begin{lem}
  \label{Isol}If $\lambda < \| \hat{K} \|_{\infty}^{- 1}$, then all solutions
  to the equation $A [ u] = 0$ are isolated.
\end{lem}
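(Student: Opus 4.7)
The plan is to show that, under the assumption $\lambda\|\hat{K}\|_{\infty}<1$, the nonlinear map $\lambda\Gamma$ is strictly Lipschitz on $L^{\infty}$ with constant less than one, so the equation $u=\lambda\Gamma[u]$ cannot have two distinct solutions in any $L^{\infty}$-neighbourhood. Since every solution of $A[u]=0$ lies in $Y\subset W^{1,2}([0,2\pi])\hookrightarrow C([0,2\pi])$, the Lipschitz bound in $L^{\infty}$ transfers to an isolation statement in $Y$ (in fact it yields global uniqueness of solutions, of which isolation is a corollary).

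First I would compute the G\^ateaux derivative of $\Gamma$ at $u$ in a direction $v$. Introducing the probability measure $d\mu_{u}(\theta')=e^{-u(\theta')}d\theta'/\int_{0}^{2\pi}e^{-u}$, a direct differentiation of the quotient that defines $\Gamma$ gives
\[
D\Gamma[u]v(\theta)=\Gamma[u](\theta)\int_{0}^{2\pi}v\,d\mu_{u}-\int_{0}^{2\pi}\hat{K}(\theta-\theta')v(\theta')\,d\mu_{u}(\theta').
\]
This expression is exactly the quantity controlled by the Gr\"uss inequality (Lemma \ref{G-lem}) with $f(\theta')=\hat{K}(\theta-\theta')$ and $g(\theta')=v(\theta')$; indeed $\int f\,d\mu_{u}=\Gamma[u](\theta)$ and $\int fg\,d\mu_{u}$ is the remaining integral. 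Consequently
\[
\|D\Gamma[u]v\|_{\infty}\le \|\hat{K}\|_{\infty}\,\|v\|_{\infty},
\]
with a constant independent of $u$.

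Next, given two solutions $u_{0}$ and $u$ of $A[u]=0$ (both in $L^{\infty}$ by the one-dimensional Sobolev embedding), I would invoke the fundamental theorem of calculus along the segment $u_{s}:=u_{0}+s(u-u_{0})$, $s\in[0,1]$, to write
\[
\Gamma[u]-\Gamma[u_{0}]=\int_{0}^{1}D\Gamma[u_{s}](u-u_{0})\,ds.
\]
Combining with the preceding operator bound and the identity $u-u_{0}=\lambda(\Gamma[u]-\Gamma[u_{0}])$, which follows from $A[u]=A[u_{0}]=0$, produces
\[
\|u-u_{0}\|_{\infty}\le \lambda\|\hat{K}\|_{\infty}\,\|u-u_{0}\|_{\infty}.
\]
Since $\lambda\|\hat{K}\|_{\infty}<1$, this forces $u=u_{0}$, and in particular every solution of $A[u]=0$ is isolated.

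The main obstacle is not conceptual but administrative: I have to verify that $D\Gamma[u]$ is genuinely G\^ateaux (and in fact Fr\'echet) differentiable along the segment $u_{s}$ so that the fundamental theorem of calculus applies in the Bochner sense, and that the denominators $\int e^{-u_{s}}$ stay uniformly bounded away from zero along this segment so that all expressions are meaningful. Both follow easily from the uniform $L^{\infty}$ bound $\|u_{s}\|_{\infty}\le \max(\|u\|_{\infty},\|u_{0}\|_{\infty})$ on the segment, so the estimate closes cleanly.
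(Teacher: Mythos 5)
Your proof is correct, and it actually establishes more than the lemma asserts: global uniqueness of the solution on $L^{\infty}$, not merely isolation. Both your argument and the paper's rest on the identical Gr\"uss-type bound, coming from Lemma~\ref{G-lem} applied with $f(\theta')=\hat{K}(\theta-\theta')$ and $g=v$, which yields $\|D\Gamma[u]v\|_{\infty}\le\|\hat{K}\|_{\infty}\|v\|_{\infty}$ uniformly in $u$. Where you diverge is in how that bound is exploited. The paper stays at the linearized level: it shows $\ker(Id-\lambda D\Gamma[u])=\{0\}$ at any solution $u$, then invokes the Fredholm alternative (tacitly using compactness of $D\Gamma[u]$) to deduce invertibility of the linearization and hence isolation via the inverse function theorem. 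You instead integrate the derivative bound along the segment $u_{s}$ joining two solutions, conclude that $\lambda\Gamma$ is an $L^{\infty}$-contraction when $\lambda\|\hat{K}\|_{\infty}<1$, and read off injectivity of $Id-\lambda\Gamma$ directly. Your route avoids Fredholm theory entirely and gives a global statement, at the modest cost of verifying the fundamental-theorem-of-calculus step along the segment; as you note, this is unproblematic because $\|u_{s}\|_{\infty}\le\max(\|u\|_{\infty},\|u_{0}\|_{\infty})$ keeps $\int_{0}^{2\pi}e^{-u_{s}}$ uniformly bounded away from zero, and the bound is pointwise in $\theta$ so no Bochner-integral machinery is needed. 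One structural consequence: your argument already yields the uniqueness conclusion of Theorem~\ref{M-Thm} for $\lambda<\|\hat{K}\|_{\infty}^{-1}$ on its own, which would render the degree computation used there for that purpose redundant.
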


\begin{proof}
  According to the Fredholm alternative theorem it is enough to show that $\ker ( Id - \lambda D \Gamma) [ u] = \{ 0
  \}$. We calculate the Jacobian matrix $D \Gamma$. A straightforward
  calculation gives the following formula for the G{\^a}teaux derivative
  \begin{equation}
    \label{DG} D \Gamma [ u] ( v) = \int_0^{2 \pi} v ( \theta) d \mu_u
    \int_0^{2 \pi} \hat{K} ( \theta - \theta') d \nocomma \mu_u - \int_0^{2
    \pi} \hat{K} ( \theta - \theta') v ( \theta') d \mu_u,
  \end{equation}
  where $d \mu_u$ denotes the probability measure
  \[ d \mu ( \theta) = \left( \int_0^{2 \pi} e^{- u ( \theta)} d \nocomma
     \theta \right)^{- 1} e^{- u ( \theta)} . \]
  Assume that $v = \lambda D \Gamma [ u] ( v)$. By lemma (\ref{G-lem}), we
  obtain
  \[ | v | = \lambda \left| \int_0^{2 \pi} v ( \theta) d \mu_u \int_0^{2 \pi}
     \hat{K} ( \theta - \theta') d \nocomma \mu_u - \int_0^{2 \pi} \hat{K} (
     \theta - \theta') v ( \theta') d \mu_u \right| \leq \lambda \| \hat{K}
     \|_{\infty} \| v \|_{\infty}, \]
  and thus $v = 0$.
\end{proof}

\begin{proof}(of theorem \ref{M-Thm})
The apriori estimate of the set containing the solution of $A [ u] = 0$ is
\begin{equation}
  \| u \|_{\infty} \leq \lambda \| \hat{K} \|_{\infty},
\end{equation}
and thus there is an open set $\Omega \subset Y$ such that for each $\lambda$,
there is no solution to $A [ u] = 0$ on $\Omega^c$. Therefore, by the homotopy
invariance property of degree for the convex homotopy $h_t = Id - t
\lambda \Gamma$, we have
\[ \deg ( {Id} - \lambda \Gamma, \Omega, 0) = \deg ( {Id}, \Omega,
   0) = 1. \]
Now, we calculate the Jacobian matrix of the map $\Gamma$ in the basis
$\left\{ \phi_n \assign \frac{1}{\sqrt{\pi}} \cos ( 2 \nocomma n \theta)
\right\}$, that is, $a_{n \nocomma m} ( u) \assign \langle D \Gamma [ u] (
\phi_n), \phi_m \rangle$. According to (\ref{DG}), we derive
\[ a_{n \nocomma m} = \int_0^{2 \pi} \phi_n ( \theta) d \mu \int_0^{2 \pi}
   \int_0^{2 \pi} \hat{K} ( \theta - \theta') \phi_m ( \theta) d \nocomma \mu
   d \theta - \int_0^{2 \pi} \int_0^{2 \pi} \hat{K} ( \theta - \theta') \phi_n
   ( \theta') \phi_m ( \theta) d \mu d \nocomma \theta, \]
that gives
\[ a_{n \nocomma m} = k_m \left\{ \int_0^{2 \pi} \cos ( 2 \nocomma n \theta) d
   \mu \int_0^{2 \pi} \cos ( 2 \nocomma m \nocomma \theta) d \nocomma \mu -
   \int_0^{2 \pi} \cos ( 2 n \theta) \cos ( 2 m \theta) d \mu \right\}, \]
and thus by Lemma (\ref{G-lem}) we obtain $| a_{n \nocomma m} | \leq | k_m |$.
On the other hand, since the solutions for $\lambda < \| \hat{K}
\|_{\infty}^{- 1}$ are isolated due to Lemma (\ref{Isol}), the index of any
solution is
\[ {\rm ind} ( u, 0) = {\rm sign} \det ( {Id} - \lambda ( a_{n \nocomma
   m})) = 1, \]
due to the calculation $| a_{n \nocomma m} | < | k_m |$ and the assumption
$\lambda < \| \hat{K} \|_{\infty}^{- 1}$. Since the trivial solution $u \equiv
0$ has index $1$ and the degree of $A$ is $1$, we conclude that $u \equiv 0$
is the unique solution to the equation $A [ u] = 0$ if $\lambda < \| \hat{K}
\|_{\infty}^{- 1}$. Now we show that two solutions bifurcate at $\lambda_n = -
2 k_n^{- 1}$ from the trivial solution $u \equiv 0$ for each $n \geq 1$. Note
that $a_{n \nocomma m} ( 0) = - \frac{k_n}{2} \delta_{n \nocomma m}$, and thus
\[ A_0 : = ( a_{n  m} ( 0)) = {\rm diag} \left( - \frac{k_n}{2}
   \right), \]
a diagonal matrix. Therefore $\det ( {Id} - \lambda A_0)$ changes its
signs at $\lambda_n = 2 k_n^{- 1}$. It is seen that the dimensions of maps $\ker (
{Id} - \lambda_n D \Gamma [ 0])$ and $( {\rm Ran} ( {Id} -
\lambda_n D \Gamma [ 0]))^{\bot}$ are equal to $1$ and thus ${Id} -
\lambda_n D \Gamma [ 0]$ is a Fredholm map of index $0$. The equation $A [ u]
= 0$ can be rewritten as
\[ F ( \lambda, u) : = u - \lambda D \Gamma [ 0] ( u) + G ( \lambda, u) = 0,
\]
where $G ( \lambda, u) = \Gamma [ u] - D \Gamma [ 0] ( u)$. It is simply seen
that $G ( \lambda, u) = o ( \| u \|)$ uniformly in $\lambda$. Now, by Theorem
(28.3) in {\cite{Deimling}}, we conclude that $\lambda_n$ is a bifurcation
point for the equation $A[u]=0$ with two bifurcation solutions of the form
\[ F^{- 1} ( 0) = \{ ( \lambda_n + \mu ( t), t \nocomma v + t z ( t)), -
   \delta < t < \delta \}, \]
for some $\delta > 0$ where $v \in {\rm Ker} ( {Id} -\lambda_n D \Gamma [
0])$ and $z ( t)$ lies in a complement of ${\rm span} \{ v \}$. For the stability, we just notice that the linear operator  $Id-\lambda D\Gamma[0]$ which is the linearization of $A[u]$ at the trivial solution $u\equiv 0$ is unstable for $\lambda>\lambda_1$, and is stable for $0<\lambda<\lambda_1$.
\end{proof}


\begin{thebibliography}{10}
 \bibitem{Berkovits86}J.~Berkovits and V.~Mustonen. {\newblock}On the
  topological degree for mappings of monotone type.
  {\newblock}\tmtextit{Nonlinear nalysis, Theory, Methods and Applications},
  10:1373--1383, 1986.
    
  \bibitem{Berkovits1997}J.~Berkovits. {\newblock}Some extension of
  topological degree theory with applications to nonlinear problems.
  {\newblock}\tmtextit{Topological and variational methods for nonlinear
  boundary value problems}, 365:1--29, 1997.
  
  \bibitem{Berkovits2003}J.~Berkovits. {\newblock}A note on the imbedding
  theorem of Browder and Ton. {\newblock}\tmtextit{Proc. Amer. Math. Scoc.},
  131:2963--2966, 2003.
   
\bibitem{Browder1968a}F.~Browder and B.~A. Ton. {\newblock}Nonlinear
  functional equations in Banach spaces and elliptic super regularization.
  {\newblock}\tmtextit{Math Z.}, 105:177--195, 1968.
    
  \bibitem{Browder1976}F.~Browder. {\newblock}\tmtextit{Nonlinear operators
  and nonlinear equations of evolution in Banach spaces}. {\newblock}Americam
  Math. Soc., 1976.
  
  \bibitem{Browder82}F.~Browder. {\newblock}The theory of degree of
  mapping for nonlinear mappings of monotone type.
  {\newblock}\tmtextit{Nonlinear partial differential equations and their
  applications}, 6:165--177, 1982.

\bibitem{Deimling} Klaus Deimling, {\newblock} Nonlinear Functional Analysis, Springer-Verlag, 1985.

  \bibitem{Dragomir} S.S. Dragomir, {\newblock}Some integral inequalities of Grüss type, {\newblock}\tmtextit{ Indian J. Pure Appl. Math.} 31 (4) (2000) 397–415.

  \bibitem{Fabian11}P.~Hajek M.~Fabian, P.~Habala.
  {\newblock}\tmtextit{Banach Space Theory}. {\newblock}Springer, 2011.
   
  \bibitem{Karts11} A.G. Kartsatos, and D. Kerr, {\newblock} A Browder degree theory from the Nagumo degree on the Hilbert
     space of elliptic super-regularization, {\newblock}{\it Nonlinear Anal.} 74 (2011) 501–515.
 
  \bibitem{Li1989}Y.~Y. Li. {\newblock}Degree theory for second order
  nonlinear elliptic operators and its applications.
  {\newblock}\tmtextit{Comm. in Partial Differential equations},
  14:1541--1578, 1989.
  
 \bibitem{Niksirat14}M.~Niksirat and X.~Yu. {\newblock}Note on stationary
  solutions of the 2D Doi-Onsager model. {\newblock}J. Math. Anal. Appl. 430:152–-165, 2015
  
 \bibitem{Niksirat14a}M.~Niksirat. {\newblock}On the stationary solutions of Doi-Onsager model in general dimension. {\newblock}{\it Nonlinear Analysis}, appear soon.
  
  \bibitem{Skrypnik1994}I.~V. Skrypnik. Methods for analysis of nonlinear elliptic boundary value problems. {\it Americam
  Math. Soc.}, 1994.
  
 \bibitem{Skrypnik73} I.V. Skrypnik, {\newblock}Nonlinear Elliptic Equations of Higher Order, {\newblock}{\it Naukova Dumka, Kiev}, 1973.
 
\end{thebibliography}
\end{document}